\documentclass[11pt]{amsart}
\usepackage{amsmath,amsthm,amssymb,graphicx,enumerate,natbib,color,url}
\usepackage{pdfsync}
\newtheorem{theorem}{Theorem}[section]

\theoremstyle{remark}

\numberwithin{equation}{section}
\setlength{\textheight}{8.2in}
\setlength{\oddsidemargin}{0.2in}
\setlength{\evensidemargin}{0.2in}
\setlength{\leftmargin}{-0.0in}
\setlength{\textwidth}{6.1in}

\begin{document}

\author[S. Ghosh]{Souvik Ghosh}
\address{Department of Statistics\\
Columbia University \\
New York, NY 10027.}
\email{ghosh@stat.columbia.edu}

\author[S. I. Resnick]{Sidney I. Resnick}
\address{School of Operations Research and Information Engineering\\
  Cornell University\\ Ithaca, NY 14853.}
\email{sir1@cornell.edu}

\date{\today}

\title[Linearity of ME Plot]{When does the Mean Excess Plot Look Linear?}  

\thanks{
S. I. Resnick was partially supported by ARO Contract W911NF-10-1-0289
at Cornell University. 
}

\begin{abstract} In risk analysis, the mean excess plot is a commonly
  used exploratory plotting technique for confirming iid data is
  consistent with a generalized Pareto assumption for the underlying
  distribution since in the presence of such a distribution,
  thresholded data have a mean excess plot that is roughly
  linear. Does any other class of distributions share this linearity
  of the plot? Under some extra assumptions, we are able to conclude
  that only the generalized Pareto family has this property.
\end{abstract}

\maketitle

\begin{section}{Introduction} \label{sec:intro}
The mean excess (ME) plot is a  diagnostic tool commonly used in risk
analysis to justify fitting a generalized Pareto distribution (GPD)
\begin{equation}\label{eq:GPD}
	G_{ \xi ,\beta }(x)= \left\{   \begin{array}{ll}  1-(1+  \xi x/ \beta  )^{ -1/\xi }  &\mbox{ if }\xi \neq 0 \\ 1-\exp(-x/\beta ) & \mbox{ if } \xi =0 \end{array}  \right.
\end{equation}
to
excesses over a large threshold. 
In \eqref{eq:GPD}
 $ \beta >0$, and $ x\ge 0$ when $ \xi \ge 0$ and $ 0\le x\le
 -\beta/\xi $ if $ \xi <0$. The parameters $ \xi $ and $ \beta $ are
the \emph{shape} and \emph{scale} parameters
 respectively. For a Pareto distribution, the tail index $ \alpha $ is
 just the reciprocal of $ \xi$ when $ \xi>0$. A special case is when $
 \xi =0$ and in this case the GPD is the same as the exponential
 distribution with mean $ \beta $.  The use of the diagnostic is
 described in \citet{embrechtskluppelbergmikosch:1997,
embrechtsmcneilfrey:2005, davison1990meo,
ghosh:resnick:2010}.

For a random
variable $X$ satisfying $EX^+<\infty$ with distribution function
$F(x)$ with right endpoint $x_F$  and tail $\bar
F(x)=1-F(x)$, the ME function  is
\begin{equation}\label{eq:meanexcess}
	M(u):= E\big[  X-u|X>u \big]=\frac{\int_u^{x_F} \bar F(s)
          ds}{\bar F(u)}, \qquad u<x_F.
\end{equation}
The ME function is also known as the {\it mean residual life function\/},
especially in
 survival analysis 
\citep{Benktander:1960}. See  \cite{hall1981mrl} for a
discussion of properties.
 Table 3.4.7 in \cite[p.161]{embrechtskluppelbergmikosch:1997} gives 
the mean excess function for standard distributions. The important
fact is that for a GPD distribution with $\xi <1$, the ME function is linear with
positive, negative or zero slope according to whether $0<\xi<1, \xi<0$
or $\xi=0.$ More precisely,
if the 
random variable $ X$ has GPD distribution $G_{ \xi ,\beta }$, we have $E(X)<\infty$ iff
$ \xi <1$ and in this case, the ME function of $X$ is 
\begin{equation}\label{eq:megdp}
	M(u)= \frac{  \beta}{1-\xi  }+ \frac{ \xi}{1-\xi}u,
\end{equation}
where $ 0\le u< \infty $ if $ 0\le \xi<1$ and $ 0\le u\le -\beta /\xi$
if $ \xi<0$. 
 In fact, the linearity of the mean excess function
characterizes the GPD class.
See 
\cite{embrechtsmcneilfrey:2005, embrechtskluppelbergmikosch:1997,
davison1990meo}.
This leads to the diagnostic of exploring the validity of the GPD
assumption (or more broadly whether the underlying distribution is in 
the domain of attraction of a GPD distribution) by plotting an empirical
estimate of the ME function called the ME plot and observing if  (a) 
the
plot looks linear,
at least {after some threshold},
 and  if so, (b) whether the slope is positive, negative or zero.

Given an independent and identically distributed (iid) sample $X_1,\dots,X_n$ from
$F(x)$, a natural estimate of  $M(u)$ 
is the empirical ME function
$ \hat M(u)$ defined as
\begin{equation}\label{eq:empiricalmeanexcess}
	\hat M(u)= \frac{ \sum_{ i=1}^{ n}(X_{ i}-u)I_{ [ X_{ i}>u]}}{ \sum_{ i=1}^{ n}I_{ [ X_{ i}>u]}}, \ \ \ \ u\ge 0.
\end{equation}
The  {\it ME plot\/} is the plot
of the points $ \{   (X_{ (k)},\hat M(X_{ (k)})):1< k\le n  \}$, where
$ X_{ (1)}\ge X_{ (2)}\ge \cdots\ge X_{ (n)}$ are the order statistics
of the data. If the ME plot  is close to linear for
high values of the threshold  then there is no evidence against use
of a GPD
model for the thresholded data.  \citet{ghosh:resnick:2010} offered an
explanation of why the ME plot from a GPD distribution with $\xi <1$
should appear to be linear by considering the ME plot from a sample of
size $n$ as a  random closed set in $\mathbb{R}^2$ indexed by $n$ and
showing convergence as $n \to \infty$ to a line segment in the Fell topology on the
space of closed subsets of $\mathbb{R}^2$. For information about the
Fell topology, Hausdorf metric and the topological space of closed
subsets see \citet{
matheron1975rsa,
beer:1993, molchanov:2005,
ghosh:resnick:2010,
das2008qpr}. {Of course, there are considerable 
practical difficulties interpreting the 
phrase {\it close to linear\/}. \cite{DasGhosh:1000} attempt to
overcome this difficulty by using weak limits of these plots (when $
0<\xi<1$)  to construct confidence bands around the
observed plot.} 

The results in \citet{ghosh:resnick:2010} say that if the underlying
distribution of the underlying sample is in a domain of attraction,
then the ME plot of the random sample should be linear. We state this
precisely below. So these results state that approximate linearity of
the ME plot is consistent with GPD or domain of attraction
assumptions. However, these results do not rule out some other
disjoint class
of distributions giving a ME plot which is approximately linear. Thus
it is the converse of the implications in \citet{ghosh:resnick:2010}
which are the subject of this paper: If the ME plot is approximately
linear, does this imply the underlying distribution is in a domain of
attraction?  We can give an affirmative answer subject to some
assumptions. These converse investigations are related to some skilled
investigations of {David Mason}; see for example \citet{mason:1982}. 

\subsection{Background}
For background on GPD distributions and domains of attraction see
\citet{
dehaan:1970,
resnick2006htp,
resnick:1987,
dehaan:ferreira:2006,
embrechtskluppelbergmikosch:1997}.  References for random closed sets
have already been given. The
class of regularly varying distributions with index $\xi \in
\mathbb{R}$ is denoted by $RV_\xi$. To understand what converses are
required, we restate the main sufficiency results from
\citet{ghosh:resnick:2010}. 
For these results, $\mathcal{F}$ is the
space of closed subsets of $\mathbb{R}^2$ with the Fell topology and
$\stackrel{P}{\to}$ means convergence in probability in $\mathcal{F}$. Let $X_1,\dots,X_n$ be iid with common distribution
$F$,
order statistics $ X_{ (1)}\ge X_{ (2)}\ge \cdots\ge X_{ (n)}$, 
and $k=k_{ n}$ is {\it any\/} sequence satisfying $k\to \infty$ but $k/n \to 0,$
as $n\to \infty$.  
\begin{itemize}
\item  If $ F$ satisfies $ \bar F\in RV_{ -1/\xi }$ with $ 0<\xi<1$,  then in $ \mathcal{F}$,
 \begin{equation}\label{eq:lim:pos} 
 	\mathcal{S}_{ n}:=\frac{ 1}{X_{ (k)}  }  \left\{   \big(X_{  (i)},\hat M(X_{ (i)})\big) :i=2,\ldots, k\right\}   \stackrel{ P}{\longrightarrow } \mathcal{S}:=\Big\{   \Big(t, \frac{ \xi}{1-\xi } t \Big):t\ge 1  \Big\} .
\end{equation}

\item If $ F$ has  finite right end point $ x_{ F}$ and
  satisfies $ 1-F(x_{ F}-x^{ -1})\in RV_{ 1/\xi}$ as $ x\to \infty$
  for some $ \xi<0$, then in $ \mathcal{F}$,
 \begin{align} 
 	\mathcal{S}_{ n}:&= \frac{ 1}{X_{ (1) } -X_{ (k)}}\left\{
          \Big(X_{ (i)}-X_{ (k)} , \hat M(X_{ (i)})  \Big):1<i\le k
        \right\} \nonumber \\
	&\stackrel{ P}{\rightarrow  }   \mathcal{S}:=\Big\{  \Big(
        t, \frac{ \xi}{1-\xi }(t-1)  \Big):0\le t\le 1    \Big\} . \label{eqn:sidxi<0}
\end{align}

\item If $F$ has right end point $x_F \leq \infty$ and is in the
maximal  domain of attraction of the Gumbel distribution, then
in $ \mathcal{F}$,
\begin{equation}\label{eqn:sidxi=0}
 	\mathcal{S}_{ n}:=\frac{ 1}{X_{ (\lceil k/2 \rceil)}-X_{ (k)} }\left\{\Big( X_{ (i)}-X_{ (k)}  , \hat M(X_{ (i)})  \Big):1<i\le k\right\} \stackrel{ P}{\longrightarrow  } \mathcal{S}:=\Big\{  \big( t, 1 \big):t\ge 0    \Big\}.
\end{equation}
\end{itemize}

\subsection{Miscellany}
Throughout this paper we will take $ k:=k_{ n}$ to be a sequence
increasing to infinity such that $ k_{ n}/n\to 0$.  For a distribution
function $F(x)$ we write $\bar F(x)=1-F(x) $ for the tail and the
quantile function is 
\[
	b(u)= F^\leftarrow (1-\frac 1u)=\inf\{s:F(s)\geq 1-\frac 1u\}=\Bigl(\frac{1}{1-F}\Bigr)^\leftarrow (u)
\]
where  $ F^{ \leftarrow }(u):= \inf\{   x:F(x)\ge u  \}$ is the left-continuous inverse of $ F$. 

  A function $U:(0,\infty)\mapsto \mathbb{R}_+$ is regularly varying with index $\rho \in \mathbb{R}$, written $U\in RV_\rho$, if
\[
	\lim_{t\to\infty} \frac{U(tx)}{U(t)}=x^\rho,\quad x>0.
\]

A nondecreasing function $ U$ defined on an interval $ (x_{ l},x_{ 0})$ is $ \Gamma $-varying, written $ U\in \Gamma $, if $ \lim_{ x\rightarrow x_{ 0}} U(x)=\infty$ and there exists a positive function $ f$ defined on $ (x_{ l},x_{ 0})$ such that for all $ x$
\[ 
 	\lim_{ t \rightarrow x_{ 0}  } \frac{ U(t+xf(t))}{ U(t) }=e^{ x}.   
\]
The function $ f$ is called an auxiliary function. 

A nonnegative, nondecreasing function $ V$ defined on $ (x_{ l},\infty)$ is $ \Pi$-varying, written $ V\in \Pi$, if there exists $ a(t)>0, b(t)\in \mathbb{R}$ such that for $ x>0$
\[ 
 	\lim_{ t \rightarrow \infty  } \frac{ V(tx)-b(t)}{ a(t) } =\log x.   
\]
The function $ a(t)$ is unique up to asymptotic equivalence and is
called an auxiliary function. See \citet{dehaan:1970,
bingham:goldie:teugels:1989,
dehaan:ferreira:2006,
resnick:1987} 
for details on regular variation, $
\Gamma$-variation and $ \Pi$-variation. 
\end{section}

\begin{section}{What if the ME plot converges?}\label{sec:main}
We now attempt to draw conclusions from the assumption that the ME
plot converges as $n\to\infty$. We need to phrase what we mean by {\it
  convergence of the
ME plot\/} slightly differently in the three cases. For each case,
there is an issue to resolve about convergence of random sets in
\eqref{eq:lim:pos}, \eqref{eqn:sidxi<0}, \eqref{eqn:sidxi=0} implying
that a sequence of random variables converges. For instance, how do we
conclude from \eqref{eq:lim:pos} that 
\begin{equation}\label{eqn:sidshow}
\frac{\hat M (X_{(k)}) }{X_{(k)}} \stackrel{P}{\to}
\frac{\xi}{1-\xi}?
\end{equation}

Suppose for $k=k_n \to\infty$ we know that in $\mathcal{F}$ 
$$\mathcal{S}_n:=\{(x_i(n),y_i( n)); 1 \leq i \leq k\} \to
\mathcal{S}:=\{(x,cx): x\geq 1\}$$
for $c>0$, and $x_i(n) \geq 1$, for all $1 \leq i \leq k$ and $n\geq 1$. Then using, for example, \citet[Lemma
2.1.2]{das2008qpr}, we have for large $M>0$,
$$\mathcal{S}_n ^M:=\mathcal{S}_n \cap [0,M]^2
\to \mathcal{S}^M :=\mathcal{S} \cap [0,M]^2,$$
and convergence in the Fell topology reduces to convergence with
respect to the Hausdorf metric in the compact space $[0,M]^2$.
Since $(1,c) \in \mathcal{S}^M,$ there exist $(x_{i'}(n),y_{i'}(n)) \to
(1,c)$ in  $\mathcal{R}^2$ \citep{matheron1975rsa,
das2008qpr, ghosh:resnick:2010}. Thus,
$$\wedge_{i=1}^k x_i(n) \leq x_{i'}(n) \to 1.
$$
Enclose $\mathcal{S}^M $ in a $\delta$-neighborhood $(\mathcal{S}^M
)^\delta$ and for sufficiently large $n$, $\mathcal{S}_n^M \subset
(\mathcal{S}^M)^\delta $. Let $x^*(n)=\wedge_{i=1}^k x_i(n)$ be the
$x$-value achieving the minimum and let $y^*(n)$ be the concomitant;
ie, the $y$-value corresponding to $x^*(n)$. Then for large $n$,
$(x^*(n),y^*(n)) \in (\mathcal{S}^M)^\delta$. Since $x^*(n)$ must be
close to $1$, $y^*(n)$ must be close to $c$. This shows
\eqref{eq:lim:pos}  implies \eqref{eqn:sidshow}.

\subsection{Frech\'et case}\label{subsec:frechet}
 \begin{theorem}\label{thm:xipos} 
 Suppose $ X_{ 1},\ldots,X_{ n}$ is an iid sample from a  distribution $ F$ satisfying
 \begin{equation}\label{eq:moment} 
 	E\big[ X_{ 1}^{ 1+\epsilon } \big]    < \infty \ \ \ \ \mbox{ for some } \epsilon >0.
\end{equation}
If for every sequence $ k:=k_{ n} \to \infty$ such that $ n/k\to
\infty$ we have 
\eqref{eq:lim:pos} so that
\begin{equation}\label{eq:MEplotconv} 
 	\frac{ 1}{kX_{ (k+1)} }\sum_{ i=1}^{ k} \Big( X_{ (i)}-X_{
          (k+1)} \Big)    \stackrel{ P}{\to }  \gamma :
        =\frac{\xi}{1-\xi} >0, 
\end{equation}
then $ \bar F\in RV_{- 1-1/\gamma }$, i.e., $ F$ is the maximal domain
of attraction of the Frech\'et distribution. 
\end{theorem}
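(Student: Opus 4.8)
The plan is to convert the distributional hypothesis \eqref{eq:MEplotconv} into the purely deterministic statement that $M(u)/u\to\gamma$ as $u\to\infty$, and then to recover regular variation of $\bar F$ by a Karamata-type argument. First observe that the right endpoint must be infinite: if $x_F<\infty$ then $X_{(k+1)}\stackrel{P}{\to}x_F$ (since $k\to\infty$, $k/n\to0$), while $0\le k^{-1}\sum_{i=1}^k(X_{(i)}-X_{(k+1)})\le X_{(1)}-X_{(k+1)}\le x_F-X_{(k+1)}\stackrel{P}{\to}0$, so the left side of \eqref{eq:MEplotconv} would converge to $0\neq\gamma$. Hence $x_F=\infty$ and $X_{(k+1)}\to\infty$ in probability.

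The left side of \eqref{eq:MEplotconv} is $\hat M(X_{(k+1)})/X_{(k+1)}$ with $\hat M(X_{(k+1)})=k^{-1}\sum_{i=1}^k(X_{(i)}-X_{(k+1)})$, and the idea is to compare it with the deterministic ratio $M(X_{(k+1)})/X_{(k+1)}$. Conditionally on $X_{(k+1)}=u$ the $k$ observations exceeding $u$ are iid with law $F(\cdot\mid\cdot>u)$, so $\hat M(X_{(k+1)})$ is an average of $k$ iid copies of $(X-u\mid X>u)$, a variable with finite mean $M(u)$ (finiteness uses $EX<\infty$, which follows from \eqref{eq:moment}). A weak law of large numbers, made uniform in the slowly growing random level $u=X_{(k+1)}$ by truncation — this is exactly where the moment condition \eqref{eq:moment} is needed, to control the contribution of the largest few exceedances $X_{(1)},\dots,X_{(\lfloor\delta k\rfloor)}$ (via $\bar F(x)=o(x^{-1-\epsilon})$, equivalently $b(z)\le(Cz)^{1/(1+\epsilon)}$) — then yields $\hat M(X_{(k+1)})/X_{(k+1)}-M(X_{(k+1)})/X_{(k+1)}\stackrel{P}{\to}0$. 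Since $\bar F(X_{(k+1)})/(k/n)\to1$ in probability by a standard fact about intermediate order statistics, $X_{(k+1)}$ concentrates near the level $b(n/k)$; steering the admissible sequence $k_n$ so that $b(n/k_n)$ passes through an arbitrary prescribed sequence $u_m\to\infty$, and using that \eqref{eq:MEplotconv} holds for \emph{every} admissible $k_n$, we conclude $M(u)/u\to\gamma$ as $u\to\infty$.

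For the analytic step, let $H(u)=\int_u^\infty\bar F(s)\,ds<\infty$, so $H$ is absolutely continuous with $H'=-\bar F$ a.e.\ and $M(u)=H(u)/\bar F(u)$. Then $M(u)/u\to\gamma$ is the statement $-uH'(u)/H(u)\to1/\gamma$; integrating the logarithmic derivative of $H$ over $[u,au]$ gives $H(au)/H(u)\to a^{-1/\gamma}$, i.e.\ $H\in RV_{-1/\gamma}$. Since $\bar F=-H'$ is monotone, the monotone density theorem yields $\bar F(u)\sim\gamma^{-1}H(u)/u\in RV_{-1-1/\gamma}$, so $\bar F\in RV_{-1-1/\gamma}$, which is exactly membership in the Fréchet maximal domain of attraction with shape parameter $\xi=\gamma/(1+\gamma)$ (note $\gamma=\xi/(1-\xi)$ and $1+1/\gamma=1/\xi$).

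The last paragraph is routine Karamata theory; the crux is the second one — the uniform weak law of large numbers in the random threshold $X_{(k+1)}$, together with the derandomization that upgrades convergence along every admissible integer sequence $n/k_n$ to a limit along all real levels $u\to\infty$. Both are essentially the content of laws of large numbers for sums of extreme order statistics and their converses; compare \citet{mason:1982}.
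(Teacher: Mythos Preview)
Your outline and the paper's proof share the same architecture: first argue that $x_F=\infty$, then reduce \eqref{eq:MEplotconv} to the deterministic statement $M(u)/u\to\gamma$ (equivalently, in the paper's notation, $H(y)\to\gamma+1$ as $y\uparrow 1$, where $H(y)=\int_y^1 F^{\leftarrow}(x)\,dx/\bigl(F^{\leftarrow}(y)(1-y)\bigr)=M(F^{\leftarrow}(y))/F^{\leftarrow}(y)+1$), and finally recover $\bar F\in RV_{-1-1/\gamma}$ by a Karamata argument. Your monotone-density version of the last step is equivalent to the paper's route through $b(u)\in RV_{\gamma/(\gamma+1)}$.

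Where you and the paper genuinely diverge is the middle step. You propose a conditional weak law of large numbers with truncation to get $\hat M(X_{(k+1)})/X_{(k+1)}-M(X_{(k+1)})/X_{(k+1)}\stackrel{P}{\to}0$. The paper instead passes to Laplace transforms: it writes $E[e^{-\lambda V_n}]$ as an iterated expectation conditioned on $U_{(k+1)}$, uses the Hall--Wellner inequality to replace $(1-y/k)^k$ by $e^{-y}$, and then exploits the elementary bound $|1-e^{-x}-x|\le x^{1+\epsilon}$ to show $E[e^{-\lambda V_n}]=E[e^{-\lambda H(U_{(k+1)})}]+o(1)$, provided the sequence satisfies the \emph{extra} constraint $n/k^{1+\epsilon}\to 0$. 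This is precisely where $E[X_1^{1+\epsilon}]<\infty$ enters, and the bound lands on $E[X_1^{1+\epsilon}]\cdot n/k^{1+\epsilon}$.

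Your truncation sketch is plausible but under-specified at exactly this point. Controlling ``the contribution of the largest few exceedances $X_{(1)},\dots,X_{(\lfloor\delta k\rfloor)}$'' via $b(z)\le (Cz)^{1/(1+\epsilon)}$ gives an \emph{upper} bound on $X_{(1)}$, but you have no a priori \emph{lower} bound on $X_{(k+1)}\approx b(n/k)$ without the regular variation you are trying to prove, so the ratio $X_{(1)}/X_{(k+1)}$ is not obviously controlled. If instead you bound the conditional $(1+\epsilon)$-moment of $(X-u)/u$ given $X>u$ crudely by $E[X^{1+\epsilon}]/\bigl((1-U_{(k+1)})X_{(k+1)}^{1+\epsilon}\bigr)$ and discard $X_{(k+1)}^{1+\epsilon}\ge 1$, the resulting truncation argument closes \emph{only} under the same side condition $n/k^{1+\epsilon}\to 0$ that the paper imposes; you do not mention this restriction, and without it your WLLN does not follow. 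The Laplace-transform route is the clean way to make this step rigorous, because it never has to control unbounded ratios directly.

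Your derandomization (``steering $k_n$'') is the right idea and matches the paper's in spirit; the paper implements it concretely by using left-continuity of $H$ to produce an interval on which $|H-\gamma-1|>\delta$, and then the CLT for $U_{(k+1)}$ to force $U_{(k+1)}$ into that interval with positive probability, a contradiction. Your version would need a comparable continuity argument, since $b$ need not be continuous.
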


\begin{proof} 
We first claim that \eqref{eq:MEplotconv} implies $ F$ does not have a finite right end point. Suppose that is not true and there exists $ c\in \mathbb{R}$ such that $ F(c)=1$. Then we must have $ X_{ (k+1)}\stackrel{ P}{\to }  c$.  That will imply $ \hat M(X_{ (k+1)})\to 0$ which contradicts \eqref{eq:MEplotconv}.  Hence $ F$ can not have a finite right end point and in particular we get 
\begin{equation}\label{eq:xkgtr1}
	P[X_{ (k+1)}\le 1]\to 0 \ \ \ \ \mbox{ as }n\to \infty.
\end{equation} 
Next observe  that 
\[ 
 	   \frac{ 1}{kX_{ (k+1)} }\sum_{ i=1}^{ k} \Big( X_{ (i)}-X_{ (k+1)} \Big) = \frac{ 1}{k }\sum_{ i=1}^{ k}\frac{X_{ (i)}}{X_{ (k+1)}}-1
\]
and therefore, using \eqref{eq:MEplotconv} and \eqref{eq:xkgtr1} it follows 
\[ 
 	   V_{ n}:= \frac{ 1}{k }\sum_{ i=1}^{ k}\frac{X_{ (i)}}{X_{ (k+1)}}I_{ [X_{ (k+1)}>1]}\stackrel{ P}{\to }\gamma +1. 
\]
Since $ V_{ n}$ is a nonnegative random random variable its Laplace transforms must also converge: For $ \lambda >0$
\begin{equation}\label{eq:laplace} 
 	E\Big[ \exp \left( -\lambda V_{ n}  \right)  \Big]    \to e^{ -\lambda (\gamma +1)}
\end{equation}
We will obtain a simplified expression for $ E[ \exp \left( -\lambda V_{ n}  \right)  ]$ in the next few steps. We begin by observing that 
\[ 
 	V_{ n}\stackrel{ d}{= }V_{ n}^{ *}:=     \frac{ 1}{k }\sum_{ i=1}^{ k}\frac{X^{ *}_{ (i)}}{X^{ *}_{ (k+1)}}I_{ [X^{ *}_{ (k+1)}>1]} 
\]
where 
\[ 
 	X_{ i}^{ *}=F^{ \leftarrow }(U_{ i}) , \ \ \ \ U_{ 1},\ldots,U_{ n}\sim \mbox{ iid }U[0,1],   
\]
 $ \stackrel{ d}{= } $ denotes equality in distribution. 
Using the fact that conditioned on $ U_{ (k+1)}$ the order statistics $ U_{ (1)},\ldots,U_{ (k)}$ are distributed like iid sample from $ U[U_{ (k+1)},1]$ \citep{maller1984limiting}, we get
\begin{align}\label{eq:conditionally:iid:step1}
	   E\Big[ \exp \left( -\lambda V_{ n}  \right)  \Big] & = E\Big[ \exp \left( -\lambda V_{ n}^{ *}  \right)  \Big]= E\Big[E \Big[  \exp \left( -\lambda V_{ n}^{ *}  \right) \Big| U_{ (k+1)}\Big] \Big] \nonumber\\
	 &= E \left[ \left(\int_{ U_{ (k+1)}}^{ 1} \exp \left( -\lambda \frac{ F^{ \leftarrow }(x)I_{ [X^{ *}_{ (k+1)}>1]}}{kX^{ *}_{ (k+1)} } \right)\frac{ dx}{1-U_{ (k+1)} }\right)^{ k}  \right]\nonumber \\
	 & = E \left[ \left(1- \frac{ 1}{k }\int_{ U_{ (k+1)}}^{ 1}k\left(1- \exp \left( -\lambda \frac{ F^{ \leftarrow }(x)I_{ [X^{ *}_{ (k+1)}>1]}}{kX^{ *}_{ (k+1)} } \right)\right)\frac{ dx}{1-U_{ (k+1)} }\right)^{ k}  \right].
 \end{align}
 Observe that 
 \[ 
 	   \int_{ U_{ (k+1)}}^{ \infty}k\left(1- \exp \left( -\lambda \frac{ F^{ \leftarrow }(x)I_{ [X^{ *}_{ (k+1)}>1]}}{kX^{ *}_{ (k+1)} } \right)\right)\frac{ dx}{1-U_{ (k+1)} }\le k \ \ \ \ \mbox{ almost surely}.
\]
From \cite{hall1979rate} we know that 
\begin{equation}\label{eq:hall:wellner}
	\sup_{ y\ge 0}\left|  \left( 1- \frac{ y}{n } \right)^{ n}I_{ [0,n]}(y) -e^{ -y}  \right|\le \left( 2+\frac{ 1}{n } \right) e^{ -2} \frac{ 1}{n } =o(1)
\end{equation}
and applying this to \eqref{eq:conditionally:iid:step1} we get
\begin{align}\label{eq:step2}
	& E\Big[ \exp \left( -\lambda V_{ n}  \right)  \Big] \nonumber\\
	& = E \left[\exp \left(- \int_{ U_{ (k+1)}}^{ 1}k\left(1- \exp \left( -\lambda \frac{ F^{ \leftarrow }(x)I_{ [X^{ *}_{ (k+1)}>1]}}{kX^{ *}_{ (k+1)} } \right)\right)\frac{ dx}{1-U_{ (k+1)} }\right)  \right]+o(1)\nonumber \\
	& = E \left[\exp \left(- \int_{ U_{ (k+1)}}^{ 1}k\left(1- \exp \left( -\lambda \frac{ F^{ \leftarrow }(x)}{kX^{ *}_{ (k+1)} } \right)\right)I_{ [X^{ *}_{ (k+1)}>1]}\frac{ dx}{1-U_{ (k+1)} }\right)  \right]+o(1).
\end{align}

Choose $ 0<\epsilon <1$ satisfying \eqref{eq:moment}. We claim that if the sequence $ k$ satisfies $ k\to \infty$ and $ n/k\to \infty$ along with $ n/k^{ 1+\epsilon }\to 0$ (for example  $ k= n^{ 1/(2(1+\epsilon ))}$) then  
\begin{equation}\label{eq:onlyk} 
 	E\Big[ \exp \left( -\lambda V_{ n}  \right)  \Big] =   E \left[\exp \left(- \int_{ U_{ (k+1)}}^{ 1}\lambda \frac{ F^{ \leftarrow }(x)}{X^{ *}_{ (k+1)} } I_{ [X^{ *}_{ (k+1)}>1]}\frac{ dx}{1-U_{ (k+1)} }\right)  \right]+o(1).
\end{equation}
Using \eqref{eq:step2} and the fact that  $  | e^{ -a}-e^{ -b} |\le  | a-b |$ for all $ a,b\ge 0$ it suffices to show that 
\begin{align} \label{eq:step3}
 	G_{ n}:=  &  E\left|  \left[ \int_{ U_{ (k+1)}}^{ 1}k\left(1- \exp \left( -\lambda \frac{ F^{ \leftarrow }(x)}{kX^{ *}_{ (k+1)} } \right)\right)I_{ [X^{ *}_{ (k+1)}>1]}\frac{ dx}{1-U_{ (k+1)} }  \right. \right. \nonumber \\
	  &- \left.\left.  \int_{ U_{ (k+1)}}^{ 1} \lambda \frac{ F^{ \leftarrow }(x)}{X^{ *}_{ (k+1)} } I_{ [X^{ *}_{ (k+1)}>1]}\frac{ dx}{1-U_{ (k+1)} }  \right] \right|\to 0.
\end{align}
Get $ 0<\epsilon \le1$ such that \eqref{eq:moment} holds. Since $  | 1-e^{ -x}-x |\le x^{ 1+\epsilon }$ for all $ x>0$, we obtain a bound for $ G_{ n}$:
\begin{align}\label{eq:Gkbound}
	G_{ n}&\le k \lambda ^{ 1+\epsilon } E \left[   \int_{ U_{ (k+1)}}^{ 1} \left(  \frac{ F^{ \leftarrow }(x)}{k X^{ *}_{ (k+1)} } \right)^{ 1+\epsilon } I_{ [X^{ *}_{ (k+1)}>1]} \frac{ dx}{1-U_{ (k+1)} } \right] \nonumber   \\
	&  \le k^{ -\epsilon }\lambda ^{ 1+\epsilon } E \left[ \int_{ U_{ (k+1)}}^{ 1} F^{ \leftarrow }(x)^{ 1+\epsilon }I_{ [X^{ *}_{ (k+1)}>1]} \frac{ dx}{1-U_{ (k+1)} } \right]  \nonumber\\
	& \le k^{ -\epsilon }\lambda ^{ 1+\epsilon } E\Big[ X_{ 1}^{ 1+\epsilon }I_{ [X_{ 1}>1]} \Big]  E\Big[ \frac{ 1}{1-U_{ (k+1)} } \Big] 
 \end{align}
The form of $ E[(1-U_{ (k+1)})^{ -1}] $ can be easily obtained using the R\'enyi representation \citep[p.110]{resnick2006htp}. Recall that if $ U\sim U[0,1]$ then $ (1-U)^{ -1}\sim$  Pareto(1) and therefore
\[ 
 	\frac{ 1}{1-U_{ (k+1)} } \stackrel{ d}{= } e^{ E_{ (k+1)}}   
\]
where $ E_{ (k+1)}$ is the $ (k+1)$-th order statistic of an iid sample from an exponential distribution with mean 1. Using the R\'enyi representation 
\[ 
 	E\left[\frac{ 1}{1-U_{ (k+1)} }\right]=E\left[ e^{ E_{ (k+1)}} \right]    = E\left[ \prod_{ i=1}^{ n-k-1} e^{ E_{ i}/(n-i+1)}\right]
\]
where $ E_{ 1},\ldots,E_{ n}\sim $iid Exp(1). This implies
\[ 
 	 E\left[\frac{ 1}{1-U_{ (k+1)} }\right]= \prod_{ i=1}^{ n-k-1}\frac{ n-i+1}{n-i }= \frac{ n}{k+1 }  
\]
therefore from \eqref{eq:step3} and \eqref{eq:Gkbound} we get 
\[ 
 	G_{ n}\le    \lambda ^{ 1+\epsilon } E\Big[ X_{ 1}^{ 1+\epsilon }I_{ [X_{ 1}>1]} \Big] \frac{ n}{k^{ 1+\epsilon } } .
\]
Thus, $ G_{ n}\to 0$ if $ n/k^{ 1+\epsilon }\to 0$. This proves the claim \eqref{eq:onlyk}.

Using \eqref{eq:onlyk} and \eqref{eq:laplace} we get that 
\begin{equation}\label{eq:stoch:conv:H} 
 	H(U_{ (k+1)})\stackrel{ P}{\to  }\gamma +1    
\end{equation}
whenever $ n,k\to \infty$ with $ n/k\to 0$ and $ n/k^{ 1+\epsilon }\to \infty$, where
\[ 
 	H(y):=\int_{ y}^{ 1} \frac{ F^{ \leftarrow }(x)}{ F^{ \leftarrow }(y) (1-y)}dx .  
\]
We claim that \eqref{eq:stoch:conv:H} implies 
\begin{equation}\label{eq:conv:H}
	H(y)\to \gamma +1 \ \ \ \ \mbox{ as }y\to 1,
\end{equation}
and we will prove it by contradiction. Write 
\[ 
 	N_{ n}=\frac{ n}{\sqrt{k} } \left( U_{ (k+1)} - \left( 1-\frac{ k}{n } \right) \right)    
\]
and note that $ N_{ n}\Rightarrow N(0,\sigma ^{ 2})$ for some $ \sigma ^{ 2}>0$, see \cite{balkema1975limit}. We know that 
\[ 
 	H \left( \frac{ \sqrt{k}}{n }N_{ n}+1-\frac{ k}{n } \right)    \to \gamma +1 \ \ \ \ \mbox{ as } n,k,\frac{ n}{k },\frac{ k^{ 1+\epsilon }}{n }\to \infty.
\]
If possible suppose \eqref{eq:conv:H} is not true and there exists $ \delta >0$ and $ (   z_{ m}^{ (2)}  )$ such that $ z_{ m}^{ (2)}\to 1$ and 
\[ 
 	\left| H(z_{ m}^{ (2)})-\gamma  \right|   >2\delta .
\]
Since $ H$ is left continuous there exists $ (z_{ m}^{ (1)})$ such that $ z_{ m}^{ (1)}<z_{ m}^{ (2)}$, $ z_{ m}^{ (1)}\to 1$ and 
\[ 
 	\left| H(y)-\gamma  \right|   >\delta \ \ \ \ \mbox{ for all } y\in (z_{ m}^{ (1)},z_{ m}^{ (2)}).
\]
For every $ m\ge 1$ choose $ n(m)$ such that 
\[ 
 	n(m)^{ \epsilon }(1-z_{ m}^{ (1)})^{ 1+\epsilon }   \ge n(m)^{ \epsilon /2} \ \ \ \ \mbox{ and } \ \ \ \ n(m)(z_{ m}^{ (2)}-z_{ m}^{ (1)}) \ge 1+\sqrt{\lfloor n(m)(1-z_{ m}^{ (1)})\rfloor }
\]
and define $ k(n(m))=\lfloor n(m)(1-z_{ m}^{ (1)}) \rfloor$. Then 
\begin{align*}
	& k(n(m))    \ge n(m)^{ \epsilon }(1-z_{ m}^{ (1)})^{ 1+\epsilon }\ge n(m)^{ \epsilon /2}\to \infty,\\
	&  k(n(m))/n(m)\sim 1-z_{ m}^{ (1)}\to 0, \ \ \ \ \mbox{ and}\\
	& k(n(m))^{ 1+\epsilon }/n(m)\sim n(m)^{ \epsilon } (1-z_{ m}^{ (1)})^{ 1+\epsilon } \to \infty.
\end{align*}
Furthermore, we also get 
\[    y_{ m}^{ (1)} := 1- \frac{ k(n(m))}{n(m) } \ge z_{ m}^{ (1)} \ \ \mbox{ and} \ \ y_{ m}^{ (2)}:=   1-\frac{ k(n(m))}{ n(m)} + \frac{ \sqrt{k(n(m))}}{ n(m) } \le z_{ m}^{ (2)}.\]
Now observe that with this construction
\begin{align*}
	& \liminf_{ m \rightarrow\infty  } P \left[ \left| H \left( \frac{ \sqrt{k(n(m))}}{n(m) }N_{ n(m)}+1-\frac{ k(n(m))}{n(m) } \right) -\gamma \right| >\delta  \right]   \\ 
	&  \ge  \liminf_{ m \rightarrow\infty  } P \left[  \frac{ \sqrt{k(n(m))}}{n(m) }N_{ n(m)}+1-\frac{ k(n(m))}{n(m) }\in \left( z_{ m}^{ (1)},z_{ m}^{ (2)} \right)   \right] \\
	& \ge  \liminf_{ m \rightarrow\infty  } P \left[  \frac{ \sqrt{k(n(m))}}{n(m) }N_{ n(m)}+1-\frac{ k(n(m))}{n(m) }\in \left(y_{ m}^{ (1)},y_{ m}^{ (2)} \right)   \right] \\
	& = \liminf_{ m \rightarrow \infty  } P \left[ N_{ n(m)}\in (0,1) \right] = P[N(0,\sigma ^{ 2})\in (0,1)] >0
 \end{align*}
 which contradicts \eqref{eq:stoch:conv:H}. 
 
   Now finally we show that \eqref{eq:conv:H} implies that $ \bar F\in RV_{ -1-1/\gamma }$. It suffices to show that $ b(u):= F^{ \leftarrow }(1-1/u)\in RV_{ \gamma /(\gamma +1)}$. Note that from \eqref{eq:conv:H} we get that 
   \[ 
 	   \frac{ 1}{ y b(y)/y^{ 2}} \int_{ y}^{ \infty} \frac{ b(u)}{u^{ 2} } du \to \gamma +1 \ \ \ \ \mbox{ as }y\to \infty.
\]
   By Karamata's Theorem \citep[Theorem 2.1, p.25]{resnick2006htp} this imples that $ b(u)/u^{ 2}\in RV_{ -(\gamma +2)/(\gamma +1)}$ and hence $ b(u)\in RV_{ \gamma /(\gamma +1)}$. Hence the proof is complete. 
\end{proof}

\subsection{Weibull case}\label{subsec:weibull}
 To deal with this case, we found it necessary to assume a bit more
 than \eqref{eqn:sidxi<0} because we want to replace $X_{(1)}$ by the
 right endpoint of the underlying distribution.

\begin{theorem}\label{thm:xineg} 
Suppose $ X_{ 1},\ldots,X_{ n}$ is an iid sample from a distribution $ F$. 
If there exists  $ \kappa \in \mathbb{R}$ such for every sequence $
k:=k_{ n}\to \infty$ satisfying $ n/k\to \infty$ 
\begin{equation}\label{eq:MEplotconv:xineg} 
 	\frac{ 1}{k(\kappa -X_{ (k+1)}) }\sum_{ i=1}^{ k} \Big( X_{
          (i)}-X_{ (k+1)} \Big)    \stackrel{ P}{\to }  \gamma >0 ,
\end{equation}
then $ \kappa $ is the right end point of $ F$ and $ \bar F(\kappa -1/\cdot)\in RV_{1-1/\gamma }$, i.e., $ F$ is in the maximal domain of the Weibull distribution.
\end{theorem}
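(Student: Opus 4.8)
The plan is to follow the scheme of the proof of Theorem~\ref{thm:xipos}, but after reflecting $X$ about $\kappa$: with $Z:=\kappa-X$ the whole argument moves from ``at $\infty$'' to ``at $0$'', and because $Z$ is bounded below by $0$ the ratios involved are automatically bounded, so no moment hypothesis such as \eqref{eq:moment} is needed. First, I would identify $\kappa$ with the right endpoint of $F$. Since $k/n\to0$ we have $X_{(k+1)}\stackrel{P}{\to}x_F$; if $x_F>\kappa$ (possibly $x_F=\infty$) then with probability tending to $1$ we have $\kappa-X_{(k+1)}<0$ while $\sum_{i=1}^k(X_{(i)}-X_{(k+1)})\ge0$, so the left side of \eqref{eq:MEplotconv:xineg} is eventually $\le0$, contradicting $\gamma>0$; and if $x_F<\kappa$ then $X_{(1)},X_{(k+1)}\to x_F$, hence $\tfrac1k\sum_{i=1}^k(X_{(i)}-X_{(k+1)})\le X_{(1)}-X_{(k+1)}\to0$, while \eqref{eq:MEplotconv:xineg} forces this to tend to $\gamma(\kappa-x_F)>0$. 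Thus $\kappa=x_F<\infty$; the same bound also excludes an atom at $\kappa$ (an atom would make $X_{(k+1)}=\kappa$ with probability $\to1$ and the left side of \eqref{eq:MEplotconv:xineg} $\le0$ or undefined), so $X_{(k+1)}<\kappa$ almost surely.

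Now set $Z_i=\kappa-X_i$ with increasing order statistics $Z_{(i)}=\kappa-X_{(i)}$, and let $G^{\leftarrow}$ be the (left-continuous) quantile function of $Z$, nondecreasing with $G^{\leftarrow}(0{+})=0$. Then \eqref{eq:MEplotconv:xineg} becomes
$$V_n:=\frac1k\sum_{i=1}^k\frac{Z_{(i)}}{Z_{(k+1)}}\stackrel{P}{\longrightarrow}1-\gamma=:c,$$
where $0\le V_n\le1$, so $c\in[0,1]$ (indeed $c\in(0,1)$ in the non-degenerate Weibull case). Writing $Z_{(i)}\stackrel{d}{=}G^{\leftarrow}(U_{(i)})$ with $U_{(1)}\le\cdots\le U_{(n)}$ the uniform order statistics, and conditioning on $U_{(k+1)}$ --- given which $U_{(1)},\dots,U_{(k)}$ are the order statistics of $k$ iid $U[0,U_{(k+1)}]$ variables, as used in \citep{maller1984limiting} --- I would repeat \eqref{eq:conditionally:iid:step1}--\eqref{eq:onlyk} almost verbatim: the Hall--Wellner bound \eqref{eq:hall:wellner}, then $|1-e^{-x}-x|\le x^2/2$ with $G^{\leftarrow}(s)/G^{\leftarrow}(U_{(k+1)})\le1$, which here reduces the error to a deterministic $O(1/k)$ with no moment condition needed. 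This gives, for each $\lambda>0$,
$$E\big[e^{-\lambda V_n}\big]=E\big[\exp\!\big(-\lambda H(U_{(k+1)})\big)\big]+o(1),\qquad H(u):=\frac{\int_0^u G^{\leftarrow}(s)\,ds}{u\,G^{\leftarrow}(u)}\in[0,1],$$
and since the left side tends to $e^{-\lambda c}$ and $H$ is bounded, $H(U_{(k+1)})\stackrel{P}{\to}c$ for every admissible sequence $k=k_n$.

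The final two steps parallel the proof of Theorem~\ref{thm:xipos}. To deduce $H(u)\to c$ as $u\to0{+}$ from $H(U_{(k+1)})\stackrel{P}{\to}c$ I would argue by contradiction as between \eqref{eq:stoch:conv:H} and \eqref{eq:conv:H}, using $\tfrac n{\sqrt k}\big(U_{(k+1)}-\tfrac kn\big)\Rightarrow N(0,\sigma^2)$, $\sigma^2>0$ \citep{balkema1975limit}, the left continuity of $H$, and a choice of $n(m),k(m)$ --- completed to a full admissible sequence --- with $k(m)\to\infty$, $k(m)/n(m)\to0$, and $\big(\tfrac{k(m)}{n(m)},\ \tfrac{k(m)}{n(m)}+\tfrac{\sqrt{k(m)}}{n(m)}\big)$ lying inside an interval near $0$ on which $|H-c|>\delta$; this yields $\liminf_m P\big[|H(U_{(k(m)+1)})-c|>\delta\big]\ge P[N(0,\sigma^2)\in(0,1)]>0$, a contradiction. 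Finally, $H(u)\to c=1-\gamma$ and Karamata's theorem \citep[Theorem 2.1, p.25]{resnick2006htp} (after the substitution $u=1/t$, exactly as in the proof of Theorem~\ref{thm:xipos}) give $G^{\leftarrow}\in RV_{\gamma/(1-\gamma)}$ at $0$; inverting this regular variation and noting $\bar F(\kappa-z)=P[Z<z]$ gives $\bar F(\kappa-z)\in RV_{(1-\gamma)/\gamma}$ as $z\to0{+}$, i.e.\ $\bar F(\kappa-1/\cdot)\in RV_{1-1/\gamma}$, the Weibull domain-of-attraction condition.

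The main obstacle is the de-randomization step --- converting the stochastic convergence $H(U_{(k+1)})\stackrel{P}{\to}c$ into the deterministic limit $H(u)\to c$ --- which is where the freedom to vary $k_n$ and the Gaussian fluctuations of $U_{(k+1)}$ must be exploited; by contrast, the conditioning/Laplace-transform computation is a strictly simpler (bounded, moment-free) version of its Fr\'echet counterpart, and the Karamata bookkeeping is routine.
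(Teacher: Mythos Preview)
Your proof is correct and follows the same overall scheme as the paper --- identify $\kappa$ as the right endpoint, pass to Laplace transforms after conditioning on a uniform order statistic, de-randomize via the Gaussian fluctuations of $U_{(k+1)}$, and finish with Karamata --- but you make a different choice of transformation. The paper sets $Z_i=(\kappa-X_i)^{-1}$, so that $Z$ has an infinite right endpoint and the argument becomes an exact replay of the Fr\'echet proof at $+\infty$, with $H(y)=\int_y^1 \frac{F_Z^{\leftarrow}(y)}{F_Z^{\leftarrow}(x)(1-y)}\,dx$ and $b_Z\in RV_{\gamma/(1-\gamma)}$ obtained at infinity. You instead take $Z_i=\kappa-X_i$ and work near $0$, conditioning on the $(k{+}1)$st \emph{smallest} uniform order statistic and obtaining $H(u)=\frac{1}{uG^{\leftarrow}(u)}\int_0^u G^{\leftarrow}(s)\,ds$.

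What your variant buys is a cleaner error estimate: since $G^{\leftarrow}(s)/G^{\leftarrow}(U_{(k+1)})\le 1$ and $\int_0^{U_{(k+1)}}ds/U_{(k+1)}=1$, the Taylor remainder is bounded by $\lambda^2/(2k)$ deterministically, with no moment condition and no restriction on the growth of $k$. The paper's version bounds the analogous remainder by $k^{-1}\lambda^2\,E[(1-U_{(k+1)})^{-1}]=\lambda^2 n/(k(k+1))$ and therefore imposes $n/k^2\to0$ before the de-randomization step; this extra restriction is harmless for the argument but, as your computation shows, avoidable. The trade-off is that you must phrase the final Karamata step as regular variation at $0$ (or, equivalently, substitute $u=1/t$), whereas the paper's reciprocal transformation keeps everything at $+\infty$ and makes the parallel with Theorem~\ref{thm:xipos} literal.
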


The parameter $\gamma$ plays the role of $-\xi/(1-\xi)$ in 
\eqref{eqn:sidxi<0}.

\begin{proof} 
Suppose $ \kappa _{ 0}\in \mathbb{R}\cup \{   \infty  \}$ is the right end point of $ F$. If $ \kappa <\kappa _{ 0}$ then 
\[
	\liminf_{ n \rightarrow\infty  } \kappa -X_{ (k)}<0 \ \ \ \ \mbox{a.s.}
\]
and hence \eqref{eq:MEplotconv:xineg} can not hold. Therefore we must have $ \kappa \ge \kappa _{ 0}$. On the other hand if $ \kappa >\kappa _{ 0}$ then $ \kappa_{ 0} $ is the finite right end point and hence we will have 
\[ 
 	\lim_{ n \rightarrow\infty  } \kappa -X_{ (k)}>0 \ \ \ \ \mbox{ and }\ \ \ \ X_{ (1)}-X_{ (k)} \to 0 \ \ \ \ \mbox{ a.s.  }   
\]
In this case also  \eqref{eq:MEplotconv:xineg} can not hold for $ \gamma >0$. Therefore $ \kappa =\kappa_{ 0}$ must be the finite right end point of the distribution $ F$. Also note that \eqref{eq:MEplotconv:xineg} implies $ 0<\gamma <1$ since $ \kappa -X_{ (i)}\le \kappa -X_{ (k)}$ for all $ 1\le i\le k$. 

The rest of the proof is similar to that of Theorem \ref{thm:xipos}.  Observe that \eqref{eq:MEplotconv:xineg} implies 
\begin{equation}\label{eq:Vn:xineg}
	V_{ n}:= \frac{ Z_{ (k)}}{k }\sum_{ i=1}^{ k} \frac{ 1}{Z_{ (i)} } \stackrel{ P}{\rightarrow  }  1-\gamma,
\end{equation}
where $ Z_{ i}=(\kappa -X_{ i})^{ -1}$. Using the arguments leading to \eqref{eq:step2} we get
\begin{align}\label{eq:step2:xineg}
	& E\Big[ \exp \left( -\lambda V_{ n}  \right)  \Big] \nonumber\\
	& = E \left[\exp \left(- \int_{ U_{ (k+1)}}^{ 1}k\left(1- \exp \left( -\lambda \frac{Z^{ *}_{ (k+1)} }{kF_{ Z}^{ \leftarrow }(x) } \right)\right)I_{ [Z^{ *}_{ (k+1)}>1]}\frac{ dx}{1-U_{ (k+1)} }\right)  \right]+o(1),
\end{align}
where 
\[ 
 	Z_{ i}^{ *}=F_{ Z}^{ \leftarrow }(U_{ i}) , \ \ \ \ U_{ 1},\ldots,U_{ n}\sim \mbox{ iid }U[0,1],   
\]
and $ F_{ Z}$ is the cumulative distribution function of $ Z_{ 1}$. Furthermore, note that
\begin{align} \label{eq:step3:xineg}
 	 &  E\left|  \left[ \int_{ U_{ (k+1)}}^{ 1}k\left(1- \exp \left( -\lambda \frac{Z^{ *}_{ (k+1)} } {k F_{ Z}^{ \leftarrow }(x)}\right)\right)I_{ [Z^{ *}_{ (k+1)}>1]}\frac{ dx}{1-U_{ (k+1)} }  \right. \right. \nonumber \\
	  &- \left.\left.  \int_{ U_{ (k+1)}}^{ 1} \lambda \frac{Z^{ *}_{ (k+1)} }{k F_{ Z}^{ \leftarrow }(x)} I_{ [Z^{ *}_{ (k+1)}>1]}\frac{ dx}{1-U_{ (k+1)} }  \right] \right| \nonumber\\
	&\le k \lambda ^{2 } E \left[   \int_{ U_{ (k+1)}}^{ 1} \left(  \frac{Z^{ *}_{ (k+1)} } { kF_{ Z}^{ \leftarrow }(x)}\right)^{ 2} I_{ [Z^{ *}_{ (k+1)}>1]} \frac{ dx}{1-U_{ (k+1)} } \right] \nonumber   \\
	& \le k^{ -1 }\lambda ^{ 2 }   E\Big[ \frac{ 1}{1-U_{ (k+1)} } \Big]  \to 0
\end{align}
if $ k\to \infty$ satisfying $ n/k\to \infty$ and $ n/k^{ 2}\to 0$. Therefore,  for such a sequence $ k$ we get  
\begin{equation}\label{eq:onlyk:xineg} 
 	E\Big[ \exp \left( -\lambda V_{ n}  \right)  \Big] =   E \left[\exp \left(- \int_{ U_{ (k+1)}}^{ 1}\lambda \frac{Z^{ *}_{ (k+1)} }{ F_{ Z}^{ \leftarrow }(x)} I_{ [Z^{ *}_{ (k+1)}>1]}\frac{ dx}{1-U_{ (k+1)} }\right)  \right]+o(1).
\end{equation}

Since $ V_{ n}\stackrel{ P}{\to }  1-\gamma $ we get 
\begin{equation}\label{eq:stoch:conv:H:xineg} 
 	H(U_{ (k)})\stackrel{ P}{\to  }1-\gamma     
\end{equation}
whenever $ n,k\to \infty$ with $ n/k\to 0$ and $ n/k^{ 2 }\to \infty$, where
\[ 
 	H(y):=\int_{ y}^{ 1} \frac{ F_{ Z}^{ \leftarrow }(y)}{ F_{ Z}^{ \leftarrow }(x) (1-y)}dx .  
\]
The arguments following \eqref{eq:conv:H} gives us 
\[ 
 	b_{ Z}(u):= F_{ Z}^{ \leftarrow }(1-1/u)   \in RV_{ \gamma /(1-\gamma )}
\]
which implies $ \bar F_{ Z}\in RV_{ 1-1/\gamma }$ and that completes the proof.
\end{proof}

\subsection{Gumbel case}\label{subsec:gumbel} For a converse to
\eqref{eqn:sidxi=0}, we found it difficult to
deal with dividing by $X_{(\lceil    k/2      \rceil )}
-X_{(k)}$. However, we were expecting $\Pi$-varying behavior for this
difference and expected this difference to be of the order of a slowly
varying auxiliary function familiar in the theory of $\Pi$-varying
functions. In \eqref{eqn:sidxi=0}, if we replace division by  $X_{(\lceil    k/2      \rceil )}
-X_{(k)}$ with division by  a slowly varying function, the following
partial converse of \eqref{eqn:sidxi=0}
 emerges, which represents a generalization of a
result of \citet{mason:1982}.

\begin{theorem}\label{thm:xizero} 
Suppose $ X_{ 1},\ldots,X_{ n}$ is an iid sample from a distribution $ F$ satisfying $ E[X_{ 1}^{ +}]<\infty$. 
Suppose there exists  $ a(t)\in RV_{ 0}$ such that for every sequence
$ k:=k_{ n} \to \infty$ with $ n/k\to \infty$ 
\begin{equation}\label{eq:MEplotconv:xizero} 
 	\frac{ 1}{ka(n/k) }\sum_{ i=1}^{ k} \Big( X_{ (i)}-X_{ (k+1)} \Big)    \stackrel{ P}{\to } 1.
\end{equation}
Then $ F\in MDA(\Lambda )$, i.e., $ F$ is in the maximal domain of attraction of the Gumbel distribution.
\end{theorem}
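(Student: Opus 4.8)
\emph{Proof sketch.}
The plan is to run the argument of Theorems~\ref{thm:xipos} and~\ref{thm:xineg}, modified to reflect the weaker moment hypothesis and the fact that the limiting regime is now ``$\Pi$--varying'' rather than power--law. Since $X_{(i)}\ge X_{(k+1)}$ for $1\le i\le k$, the quantity $V_n:=\bigl(k\,a(n/k)\bigr)^{-1}\sum_{i=1}^{k}(X_{(i)}-X_{(k+1)})$ is nonnegative, so \eqref{eq:MEplotconv:xizero} gives $E[e^{-\lambda V_n}]\to e^{-\lambda}$ for every $\lambda>0$. Writing $X_i=F^{\leftarrow}(U_i)$ with $U_1,\dots,U_n$ iid $U[0,1]$, conditioning on $U_{(k+1)}$ so that $U_{(1)},\dots,U_{(k)}$ are iid $U[U_{(k+1)},1]$, and applying the estimate \eqref{eq:hall:wellner} as in the passage from \eqref{eq:conditionally:iid:step1} to \eqref{eq:step2}, one obtains $E\bigl[e^{-\phi_n(U_{(k+1)};\lambda)}\bigr]\to e^{-\lambda}$, where, with $b(t)=F^{\leftarrow}(1-1/t)$ and $t=1/(1-y)$,
\[
 \phi_n(y;\lambda):=\int_{y}^{1}k\Bigl(1-\exp\bigl(-\tfrac{\lambda(F^{\leftarrow}(x)-F^{\leftarrow}(y))}{k\,a(n/k)}\bigr)\Bigr)\frac{dx}{1-y}
 \;=\;k\int_{1}^{\infty}\Bigl(1-\exp\bigl(-\tfrac{\lambda(b(tr)-b(t))}{k\,a(n/k)}\bigr)\Bigr)\frac{dr}{r^{2}},
\]
the second form coming from the substitutions $x=1-1/s$, $s=tr$.

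For each $y$ the map $\lambda\mapsto\phi_n(y;\lambda)$ is concave, nondecreasing, and vanishes at $\lambda=0$; hence any joint subsequential distributional limit $\Psi(\cdot)$ of $\lambda\mapsto\phi_n(U_{(k+1)};\lambda)$ is concave with $\Psi(0)=0$ and satisfies $E[e^{-\Psi(\lambda)}]=e^{-\lambda}$ for every $\lambda>0$, and a short argument using concavity, Fatou's lemma, and Jensen's inequality forces $\Psi(\lambda)\equiv\lambda$. Thus $\phi_n(U_{(k+1)};\lambda)\stackrel{P}{\to}\lambda$ for every $\lambda>0$; this concavity argument, which needs no moment control beyond the convergence already assumed in \eqref{eq:MEplotconv:xizero}, takes the place of the explicit linearizations in the proofs of Theorems~\ref{thm:xipos} and~\ref{thm:xineg}. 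Writing $t_n:=1/(1-U_{(k+1)})$, note $(k/n)t_n\stackrel{P}{\to}1$ by the R\'enyi representation and $a\in RV_0$, so $a(n/k)$ may be replaced by $a(t_n)$ throughout.

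The next task is to linearize $1-e^{-u}\approx u$ inside $\phi_n$, turning $\phi_n(U_{(k+1)};\lambda)\stackrel{P}{\to}\lambda$ into $\widetilde{H}(t_n)\stackrel{P}{\to}1$, where
\[
 \widetilde{H}(t):=\int_{1}^{\infty}\frac{b(tr)-b(t)}{a(t)}\,\frac{dr}{r^{2}}\;=\;\frac{t\int_{t}^{\infty}b(s)\,s^{-2}\,ds-b(t)}{a(t)},
\]
which is finite because $E[X_1^{+}]<\infty$. Because $1-e^{-u}\le u$, one gets $\phi_n(U_{(k+1)};\lambda)\le\lambda\,\widetilde{H}(t_n)(1+o_P(1))$, hence $\widetilde{H}(t_n)\ge1-o_P(1)$, at once; the real work is the reverse bound, i.e.\ controlling the quadratic remainder governed by $\int_{1}^{\infty}(b(tr)-b(t))^{2}r^{-2}\,dr$, which need not even be finite under $E[X_1^{+}]<\infty$ alone. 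I expect this to be the main obstacle, and I would handle it in two steps. First, using $\phi_n(U_{(k+1)};\lambda)\stackrel{P}{\to}\lambda$ and a contradiction argument of the kind following \eqref{eq:conv:H}, show $\limsup_{t\to\infty}\bigl(b(tR)-b(t)\bigr)/a(t)<\infty$ for each fixed $R>1$ (if this failed along some $t_m\to\infty$, one could choose the free sequence $k=k_m\to\infty$ so slowly that $(b(2t_m)-b(t_m))/\bigl(k_m\,a(t_m)\bigr)\to\infty$, making $\phi_n$ of order $k_m$ on the event that $t_n$ lies near $t_m$, a contradiction). A Potter--bound bootstrap then upgrades this to $b(tR)-b(t)\le C\,a(t)R^{\delta}$ with $\delta<\tfrac12$, which bounds the quadratic remainder by $O\bigl(b(t)^{2}+a(t)^{2}\bigr)$; since $n$, hence the size of the admissible $k$, is unconstrained, letting $k$ grow fast enough then kills the remainder, giving $\widetilde{H}(t_n)\stackrel{P}{\to}1$.

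Finally, because \eqref{eq:MEplotconv:xizero} is assumed for \emph{every} admissible $k$, the asymptotic normality $N_n:=(n/\sqrt{k})\bigl(U_{(k+1)}-(1-k/n)\bigr)\Rightarrow N(0,\sigma^{2})$ \citep{balkema1975limit} and the contradiction argument following \eqref{eq:conv:H}, together with left--continuity of $b$, upgrade $\widetilde{H}(t_n)\stackrel{P}{\to}1$ to $\widetilde{H}(t)\to1$ as $t\to\infty$, i.e.
\[
 t\int_{t}^{\infty}\frac{b(s)}{s^{2}}\,ds-b(t)\;\sim\;a(t),\qquad t\to\infty.
\]
Since $b$ is nondecreasing, this integral relation is a de Haan--type Tauberian criterion for $b\in\Pi$ with auxiliary function asymptotic to $a$ (see \citet{dehaan:1970,bingham:goldie:teugels:1989,dehaan:ferreira:2006,resnick:1987}; this is the point at which the theorem generalizes \citet{mason:1982}), equivalently $F\in MDA(\Lambda)$, which is the assertion.
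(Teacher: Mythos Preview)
Your proposal diverges from the paper's proof in a substantive way, and the divergence is precisely where your argument has a gap.

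The paper does \emph{not} continue with Laplace transforms in the Gumbel case. Instead, after the conditional--iid reduction, it invokes the classical relative--stability criterion of Gnedenko and Kolmogorov (necessary and sufficient conditions for $k^{-1}\sum_{i=1}^{k}(Z_i-X^{*}_{(k+1)})/a(n/k)\stackrel{P}{\to}1$ conditionally on $U_{(k+1)}$): this yields directly, along a subsequence $k'$, the truncated--mean relation
\[
\frac{1}{a(n/k')(1-U_{(k'+1)})}\int_{U_{(k'+1)}}^{1}\bigl(F^{\leftarrow}(s)-X^{*}_{(k'+1)}\bigr)\,ds\;\stackrel{\text{a.s.}}{\longrightarrow}\;1,
\]
together with a negligible tail term, using only $E[X_{1}^{+}]<\infty$. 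No linearization of $1-e^{-u}$, no quadratic remainder, and no growth control on $b(tR)-b(t)$ are needed. From there the contradiction argument and the de~Haan/$\Pi$--variation identification proceed much as you outline.

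Your route via Laplace transforms runs into exactly the obstacle you flag: under the bare hypothesis $E[X_{1}^{+}]<\infty$, the quadratic remainder $\int_{1}^{\infty}(b(tr)-b(t))^{2}r^{-2}\,dr$ is not controlled. Your proposed fix has two weak links. First, to extract \emph{deterministic} boundedness of $(b(tR)-b(t))/a(t)$ from $\phi_{n}(U_{(k+1)};\lambda)\stackrel{P}{\to}\lambda$ you must run the contradiction argument on an object that still depends on $k$; your sketch conflates the random $t_{n}=1/(1-U_{(k+1)})$ with a deterministic sequence $t_{m}$ and a freely chosen $k_{m}$, and it is not clear these can be decoupled. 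Second, even granting pointwise bounds $\sup_{t}(b(tR)-b(t))/a(t)\le C(R)$ for each fixed $R$, there is no ``Potter--bound bootstrap'' available: Potter bounds require a regular--variation conclusion you have not yet established, so invoking them here is circular. (A subadditivity argument using $a\in RV_{0}$ can in principle give $C(R)=O(\log R)$, which would suffice for the quadratic integral, but that is a different and nontrivial argument you have not supplied.)

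In short: the concavity device for identifying $\Psi(\lambda)\equiv\lambda$ is a nice idea, and the final Tauberian step matches the paper's, but the linearization of $\phi_{n}$ is a genuine gap. The paper sidesteps it entirely by switching tools---relative stability instead of Laplace transforms---which is the cleaner route under $E[X_{1}^{+}]<\infty$.
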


\begin{proof} 
We begin by observing that without loss of any generality we can take the function $ a(t)$ to be continuous; see Karamata's repreentation \citep[Corollary 2.1, p.29]{resnick2006htp}. Following the notation used in the proof of Theorem \ref{thm:xipos}, set $ X_{ i}^{ *}=F^{ \leftarrow }(U_{ i})\stackrel{ d}{= }X_{ i} $ where $ U_{ 1},\ldots,U_{ n}$ are iid $ U(0,1)$. For any $ 0<x<1$, define $ Z_{ i}(x):=F^{ \leftarrow }(V_{ i}(x)),i\ge 1$, where $ V_{ i}(x), i\ge 1,$ are iid $ U[x,1]$. Then 
  \begin{align*} 
 	\frac{ 1}{ ka(n/k)}   \sum_{ i=1}^{ k} \Big( X_{ (i)}-X_{ (k+1)} \Big) & \stackrel{ d}{= } \frac{ 1}{ ka(n/k)}   \sum_{ i=1}^{ k} \Big( X^{ *}_{ (i)}-X^{ *}_{ (k+1)} \Big)\\
	& \stackrel{ d}{= }  \frac{ 1}{ ka(n/k)}   \sum_{ i=1}^{ k} \Big(Z_{ i}(U_{ (k+1)})-X^{ *}_{ (k+1)} \Big) \stackrel{ P}{\rightarrow  } 1. 
\end{align*}
Using \eqref{eq:MEplotconv:xizero} we get for any $ \epsilon >0$
\begin{align*} 
 	&P \left[ \Big| \frac{1 }{ka(n/k) }\sum_{ i=1}^{ k} \Big( X_{ (i)}-X_{ (k+1)} \Big) -1 \Big|>\epsilon  \right]   \\
	& E \left[P \left[   \Big| \frac{1 }{ka(n/k) }\sum_{ i=1}^{ k} \Big( Z_{ i}(U_{ (k+1)})-X^{ *}_{ (k+1)} \Big) -1 \Big|>\epsilon \Bigg| U_{ (k+1)} \right] \right] \rightarrow 0 
\end{align*}
which implies
\[ 
 	P \left[   \Big| \frac{1 }{ka(n/k) }\sum_{ i=1}^{ k} \Big( Z_{ i}(U_{ (k+1)})-X^{ *}_{ (k+1)} \Big) -1 \Big|>\epsilon \Bigg| U_{ (k+1)} \right] \stackrel{ P}{ \rightarrow  }  0   .
\]
Then for a subsequence $ k^{ \prime }:=k^{ \prime }_{ n}$ of $ k_{ n}$ we have 
\[ 
  	P \left[   \Big| \frac{1 }{k^{ \prime }a(n/k^{ \prime }) }\sum_{ i=1}^{ k^{ \prime }} \Big( Z_{ i}(U_{ (k^{ \prime }+1)})-X^{ *}_{ (k^{ \prime }+1)} \Big) -1 \Big|>\epsilon \Bigg| U_{ (k^{ \prime }+1)} \right] \stackrel{ a.s.}{  \rightarrow }     0 .	   
\]
Using relative stability and  \citet[Theorem 2, \S 2.7, p. 140]{gnedenko:kolmogorov:1968} we then get
\begin{equation}\label{eq:relstab1}
	k^{ \prime } P \left[ Z_{ 1}(U_{ (k^{ \prime }+1)}) > X^{ *}_{ (k^{ \prime }+1)} +\epsilon k^{ \prime } a(n/k^{ \prime })\Big| U_{ (k^{ \prime }+1 )}\right] \stackrel{ a.s.}{\rightarrow  }  0
\end{equation}
and 
\begin{equation}\label{eq:relstab2}
	k^{ \prime } E \left[ \frac{ Z_{ 1}(U_{ (k^{ \prime }+1)}) - X^{ *}_{ (k^{ \prime }+1)}}{k^{ \prime }a(n/k^{ \prime }) }I_{ [0  \le Z_{ 1}(U_{ (k^{ \prime }+1)}) - X^{ *}_{ (k^{ \prime }+1)} \le \epsilon k^{ \prime } a(n/k^{ \prime })]} \Bigg| U_{ (k^{ \prime }+1)}\right] \stackrel{ a.s.}{\rightarrow  } 1. 
\end{equation}
From \eqref{eq:relstab2} we get 
\[ 
 	\frac{ 1}{a(n/k^{ \prime }) (1-U_{ (k^{ \prime }+1)}) }   \int_{ 0\le F^{ \leftarrow }(s) - X^{ *}_{ (k^{ \prime }+1)} \le \epsilon k^{ \prime } a(n/k^{ \prime }) } F^{ \leftarrow }(s) ds \stackrel{ a.s.}{\to } 1 
\]
and then using \eqref{eq:relstab1} and the assumption that $ E\big[ X_{ 1}^{ +} \big]< \infty $, we obtain
\begin{equation}\label{eq:finalmostsure}
	\frac{ 1}{a(n/k^{ \prime }) (1-U_{ (k^{ \prime }+1)}) } \int_{ U_{ (k^{ \prime }+1)}}^{ 1} F^{ \leftarrow }(s) ds \stackrel{ a.s.}{\to } 1. 
\end{equation}
Observe that this also implies 
\begin{equation}\label{eq:finalinP}
	\frac{ 1}{a(n/k) (1-U_{ (k+1)}) } \int_{ U_{ (k+1)}}^{ 1} F^{ \leftarrow }(s) ds \stackrel{ P}{\to } 1. 
\end{equation}

Now set 
\begin{equation}\label{eq:H:xizero}
	H(x):= \frac{ 1}{ 1-x} \int_{ x}^{ 1} F^{ \leftarrow }(s) ds \ \ \ \ \mbox{ for all  } 0<x<1
\end{equation}
and then \eqref{eq:finalinP} implies
\begin{equation}\label{eq:Hdiscretelimit} 
 	\frac{ H(U_{ (k+1)})}{  a(n/k)} \stackrel{ P}{ \to} 1.    
\end{equation}
We now prove that \eqref{eq:Hdiscretelimit} implies
\begin{equation}\label{eq:Hcontlimit}
	g(t):=\frac{ H(1-1/t)}{a(t) } \to 1 \ \ \ \ \mbox{ as } t\to \infty
\end{equation}
and for that we use the same technique used in the proof of Theorem \ref{thm:xipos}. If \eqref{eq:Hcontlimit} is not true then given any $ \delta >0$ we can get a sequence $1<t^{ (2)}_{ m}\to \infty $ such that 
\[ 
 	   \Big| g\big(t^{ (2)}_{ m}\big)  -1 \Big|> 2\delta .
\]
Using the continuity of $ H(\cdot)$ and $ a(\cdot)$  we can get $ 1<t^{ (1)}_{ m}<t^{ (2)}_{ m}$ with $ t^{ (1)}_{ m}\to \infty$ such that 
\[ 
 	\left| \frac{ H \left( 1-1/y \right) }{ a ( x) } -1 \right|   >\delta \ \ \ \ \mbox{ for all }x, y\in \big(t^{ (1)}_{ m},t^{ (2)}_{ m}\big).
\] 
Now for every $ m\ge 1$ get $ n(m)$ large enough such that 
\[ 
 	  n(m)\ge \big(t^{ (1)}_{ m} \big)^{ 2} \ \ \ \ \mbox{ and } \ \ \ \ n(m) \left( \frac{ 1}{t^{ (2)}_{ m} } -\frac{ 1}{ t^{ (1)}_{ m} } \right) \ge 1+ \sqrt{ \frac{ n(m)}{  t^{ (1)}_{ m} }}.
\]
Also define $ k(n(m))= \lfloor n(m)/t^{ (1)}_{ m} +1\rfloor$ and note that for all $ m\ge 1$ we get  $ n(m)/k(n(m)) \in \big( t^{ (1)}_{ m},t^{ (2)}_{ m} \big) $ and 
\[ 
 	   y^{ (1)}_{ m}:= 1- \frac{ k(n(m))}{n(m) } \ge  1-\frac{ 1}{t^{ (1)}_{ m} }\ \ \ \ \mbox{ and } \ \ \ \ y^{ (2)}_{ m} := 1-\frac{ k(n(m))}{ n(m)} + \frac{ \sqrt{k(n(m))}}{ n(m) } \le 1-\frac{ 1}{t^{ (2)}_{ m} }
\] As in the proof of Theorem \ref{thm:xipos}, write 
\[ 
 	N_{ n}=\frac{ n}{\sqrt{k} } \left( U_{ (k+1)} - \left( 1-\frac{ k}{n } \right) \right)    
\]
and then $ N_{ n}\Rightarrow N(0,\sigma ^{ 2})$ for some $ \sigma ^{ 2}>0$ from \cite{balkema1975limit}. Now observe that with this construction
\begin{align*}
	& \liminf_{ m \rightarrow\infty  } P \left[ \left| a \left( \frac{ n(m)}{k(n(M)) } \right)^{ -1} H \left( \frac{ \sqrt{k(n(m))}}{n(m) }N_{ n(m)}+1-\frac{ k(n(m))}{n(m) } \right) - 1 \right| >\delta  \right]   \\ 
	&  \ge  \liminf_{ m \rightarrow\infty  } P \left[  \frac{ \sqrt{k(n(m))}}{n(m) }N_{ n(m)}+1-\frac{ k(n(m))}{n(m) }\in \left(  1-\frac{ 1}{t^{ (1)}_{ m} }, 1-\frac{ 1}{t^{ (2)}_{ m} }  \right)   \right] \\
	& \ge  \liminf_{ m \rightarrow\infty  } P \left[  \frac{ \sqrt{k(n(m))}}{n(m) }N_{ n(m)}+1-\frac{ k(n(m))}{n(m) }\in \left(y_{ m}^{ (1)},y_{ m}^{ (2)} \right)   \right] \\
	& = \liminf_{ m \rightarrow \infty  } P \left[ N_{ n(m)}\in (0,1) \right] = P[N(0,\sigma ^{ 2})\in (0,1)] >0
 \end{align*}
 which contradicts \eqref{eq:Hdiscretelimit}.
 
 In the last step of the proof we show that \eqref{eq:Hcontlimit} implies $ F\in MDA(\Lambda )$. Let $ \kappa :=F^{ \leftarrow }(1)$ denote the right end point of $ F$. Set 
 \begin{equation}\label{eq:f:defn} 
 	f(x)= \int_{ x}^{ \kappa } \frac{\bar F(s)}{\bar F(x)} ds   \ \ \ \ \mbox{ for all } x<\kappa ,
\end{equation}
and note that \eqref{eq:Hcontlimit} implies $ f(b(t))\sim a(t) \in RV_{ 0}$ where $ b(t):= F^{ \leftarrow }(1-1/t)$. Also observe that \eqref{eq:f:defn} is equivalent to 
\begin{equation}\label{eq:fvonMisesform}
	f(x)\bar F(x)=\int_{ x}^{ \kappa }\bar F(s)ds = c\exp \left\{ -\int_{ 1}^{ x} \frac{ 1}{f(s) }ds \right\} \ \ \ \ \mbox{ for all } x<\kappa 
\end{equation}
for some $ c>0$. We claim that $\int_{ x}^{ \kappa }\bar F(s)ds $ is tail equivalent to a distribution in the Gumbel maximal domain of attraction, i.e., there exists a distribution $ F_{ 1}\in MDA(\Lambda )$ such that
\[ 
 	\frac{ \int_{ x}^{ \kappa }\bar F(s)ds}{\bar F_{ 1}(x) } \to 1 \ \ \ \ \mbox{ as } x\to \kappa .   
\]  
Following \citep[Proposition 0.10, p.28]{resnick:1987} it suffices to check that 
\[ 
 	V(x):= \frac{ c}{ \int_{ x}^{ \kappa }\bar F(s)ds} =  \exp \left\{ \int_{ 1}^{ x} \frac{ 1}{f(s) }ds \right\} \in \Gamma
\]
or $ V^{ \leftarrow }\in \Pi$. By \citep[Proposition 0.11, p.30]{resnick:1987} we know that it is enough to verify $ \left( V^{ \leftarrow } \right)    ^{ \prime }\in RV_{ -1}$. Observe that
\[ 
 	   \left( V^{ \leftarrow }(x) \right)    ^{ \prime }= \frac{ 1}{ V^{ \prime } \left( V^{ \leftarrow} (x) \right)  } =  \frac{ f \left( V^{ \leftarrow }(x) \right) }{ V \left( V^{ \leftarrow }(x) \right)  } \sim  \frac{ f \left( V^{ \leftarrow }(x) \right) }{ x   } \sim \frac{ a \left( b^{ \leftarrow } \left( V^{ \leftarrow }(x) \right)  \right) }{x }.
\]
Since 
\[ 
 	V(b(x))= \frac{ 1}{\bar F(b(x)) f(b(x)) } \sim \frac{ x}{a(x) } \in RV_{ 1}   
\]
we get that $ b^{ \leftarrow } \left( V^{ \leftarrow }(x) \right) \in RV_{ 1}$. Furthermore, since $ a\in RV_{ 0}$ this implies  $ a \left( b^{ \leftarrow } \left( V^{ \leftarrow }(x) \right)  \right) \in RV_{ 0}$ and $a \left( b^{ \leftarrow } \left( V^{ \leftarrow }(x) \right)  \right) /x \in RV_{ -1}$. By \citep[Proposition 0.11, p.30]{resnick:1987} this implies $ V^{ \leftarrow }\in \Pi$ with auxiliary function 
$ a \left( b^{ \leftarrow } \left( V^{ \leftarrow }(x) \right)  \right)$ and \citep[Proposition 0.9, p.27]{resnick:1987} then gives us that $ V\in \Gamma $ with auxiliary function 
\[ 
 	a(b^{ \leftarrow }(V^{ \leftarrow }(V(x))))\sim a(b^{ \leftarrow }(x)) \sim f(b(b^{ \leftarrow }(x)))\sim f(x).   
\]
This implies that $ f(x)$ is a suitable auxiliary function for $\int_{
  x}^{ \kappa }\bar F(s)ds $. From de Haan theory  (\citet[Proposition 1.9,
p.48]{resnick:1987}, \citet{dehaan:1970}) we know that  
\[ 
 	\frac{ \int_{ x}^{ \kappa }\int_{ s}^{ \kappa } \bar F(y)dyds}{ \int_{ x}^{ \kappa }\bar F(s)ds }   
\]
is an auxiliary function for  $\int_{ x}^{ \kappa }\bar F(s)ds
$. Furthermore, \citep[Proposition 1.9, p.48]{resnick:1987}, we also have
\[ 
 	\frac{ \int_{ x}^{ \kappa }\int_{ s}^{ \kappa } \bar F(y)dyds}{ \int_{ x}^{ \kappa }\bar F(s)ds }   \sim f(x) = \frac{ \int_{ x}^{ \kappa }\bar F(s)ds}{ \bar F(x) }
\]
which proves that $ F\in MDA(\Lambda )$ \citep{dehaan:1970}. 
\end{proof}

\end{section}

\section{Acknowledgement}
A long time ago in a galaxy far away, Gennady Samorodnitsky provided
assistance with a subsequence argument that was very helpful for the
present paper.

\bibliographystyle{elsart-harv}
\bibliography{bibfile}
\end{document}